\newtheorem{lem}{Lemma}[section]
\newtheorem{prop}[lem]{Proposition}
\newtheorem{cor}[lem]{Corollary}
\newtheorem{thm}[lem]{Theorem}
\theoremstyle{definition}
\newtheorem{remark}[lem]{Remark}
\DeclareMathOperator{\rad}{rad}
\DeclareMathOperator{\soc}{soc}
\DeclareMathOperator{\tr}{tr}
\DeclareMathOperator{\Ext}{Ext}
\DeclareMathOperator{\Hom}{Hom}
\DeclareMathOperator{\modu}{mod}
\DeclareMathOperator{\HH}{HH}
\DeclareMathOperator{\Out}{Out}
\DeclareMathOperator{\Aut}{Aut}
\DeclareMathOperator{\op}{op}
\DeclareMathOperator{\Z}{Z}
\begin{document}

\title[\resizebox{5.25in}{!}{Skew Group Algebras, (\textbf{Fg}) and Self-injective Rad-Cube-Zero Algebras}]{Skew Group Algebras, the (\textbf{Fg}) Property and Self-injective Radical Cube Zero Algebras}
\author[M. H. Sand{\o}y]{Mads Hustad Sand{\o}y}
\address{Department of mathematical sciences, NTNU, Trondheim, Norway}
\email{mads.sandoy@ntnu.no}
\subjclass[2020]{16D50, 16E40, 16S35 }

\begin{abstract}
We classify self-injective radical cube zero algebras with respect to whether they satisfy certain finite generation conditions sufficient to have a fruitful theory of support varieties defined via Hochschild cohomology in the vein of \cites{Erdmann-et-al, SS04}. 
Using skew group algebras and Linckelmann's notion of separable equivalence, we obtain results that complement the existing partial classification of \cite{Said'15} and complete the classification begun in \cites{Said'15, Erdmann-Solberg-2} up to assumptions on the characteristic of the field. 
\end{abstract}

\maketitle

\section{Introduction}
Support varieties of modules are powerful tools, but they are not always available. 
In the case of a group algebra of a finite group, one defines them using the maximal ideal spectrum of the group cohomology. 
One generalization of this notion to a more general finite dimensional algebra $\Lambda$ works instead with a subalgebra of the Hochschild cohomology ring of $\Lambda$. 
In \cite{Erdmann-et-al}, many of the results one would expect a good notion of support to satisfy were shown to hold assuming $\Lambda$ is self-injective and has certain finite generation properties defined as follows: One says that $\Lambda$ has (\textbf{Fg})
provided 
$$\Ext^{*}_{\Lambda^e}(\Lambda, U) = \oplus_{i \geq 0} \Ext^{i}_{\Lambda^e}(\Lambda, U)$$
is a Noetherian module over the Hochschild cohomology ring of $\Lambda$
$$\HH^*(\Lambda) = \Ext^*_{\Lambda^{e}}(\Lambda, \Lambda)$$ for every finitely generated $\Lambda^{e}$-module $U$, where $\Lambda^{e} :=   \Lambda^{\op}\otimes_k \Lambda$ is the enveloping algebra of $\Lambda$.\footnote{While this is not the original definition given in \cite{Erdmann-et-al}*{Assumption 1, Proposition 1.4, Assumption 2}, it follows by  \cite{Solberg-survey}*{Propositions 5.5-5.7} that we can assume without loss of generality that we work with $H = \HH^*(\Lambda)$.} 

A natural question is then whether particular classes of algebras possess this finite generation property. 
In \cite{Erdmann-Solberg-1}, this was settled for the case of radical-cube-zero weakly symmetric algebras. 
Moreover, as the (\textbf{Fg}) property implies finite complexity, \cite{Erdmann-Solberg-2} described the quivers and relations of representation infinite radical-cube-zero self-injective algebras $\Lambda$ of finite complexity as a step towards determining which of these satisfied the aforementioned finite generation conditions. 
In particular, they used the type of these algebras, which is defined as the extended Dynkin type of the separated quiver of the radical-square-zero algebra obtained by taking such an algebra $\Lambda$ modulo its socle. 

In the thesis \cite{Said'15}, this work was essentially completed for classes of type $\widetilde{A}_n$ and $\widetilde{D}_n$, but not for the exceptional types. 
Many of the proofs involve long explicit computations using quivers with relations of coverings of the algebras in question.

In this note, we show that the general radical-cube-zero self-injective case can be reduced to the weakly-symmetric one by using skew group algebras and the notion of separable equivalence as introduced in \cite{Linckelmann'11} provided the characteristic is good and the Nakayama automorphism of the algebra has finite order. 
Note that the latter always holds if the type of the algebra is not $\widetilde{A}_n$. 
Using this, we partially recover the main result of \cite{Said'15}: Namely, we do so in part for type $\widetilde{A}_n$ and in whole for type $\widetilde{D}_n$. 
However, we also cover the exceptional cases, something \cite{Said'15} does not.
Hence, combining our results with those those of \cite{Said'15} we obtain a complete answer to when a radical-cube-zero self-injective algebra satisfies the (\textbf{Fg}) property. 

\section*{Acknowledgments}
The author would like to thank {\O}yvind Solberg for helpful discussions, and for careful reading of and helpful suggestions on a previous version of this paper. 
The author is also grateful to have been supported by Norwegian Research Council project 301375, ``Applications of reduction techniques and computations in representation theory'' during later revisions of the paper.
\section*{Acknowledgments}

\section{Skew group algebras, the (\textbf{Fg}) property and self-injective radical-cube-zero algebras}
For the rest of this note, let $k$ be an algebraically closed field and $\Lambda$ a finite dimensional $k$-algebra. 
We often write $\otimes$ instead of $\otimes_k$. 
Moreover, we denote the $k$-duality functor by $D$.
We refer to \cites{ARS97, ASS06} for background on quivers with relations and the representation theory of finite dimensional algebras. 

Let $G$ be a finite group acting on $\Lambda$, by which we mean that there exists some group homomorphism $G \rightarrow \operatorname{Aut} \Lambda$ with $\operatorname{Aut} \Lambda$ the multiplicative group of algebra automorphisms of $\Lambda$. 
The \textit{skew group algebra} $\Lambda G$ is then the finite dimensional algebra with underlying vector space $\Lambda \otimes_k kG$, in which $kG$ is the group algebra of $G$, and with multiplication defined as 
\[(\lambda \otimes g)(\lambda' \otimes g') :=  \lambda g(\lambda') \otimes gg'. \]
Observe also that there is an isomorphism 
\[\Lambda G \simeq \bigoplus_{g \in G} {}_{1}\Lambda_g \]
obtainable by  
using the fact that every element can be written uniquely as a sum $\sum_{g \in G} \lambda_g \otimes g$.
Recall that ${}_{1}\Lambda_g$ is the bimodule with underlying vector space $\Lambda$ and with bimodule action given by twisting on the right with $g$, i.e.\ 
\[\lambda \cdot \lambda' \cdot \lambda'' = \lambda \lambda' g(\lambda''). \]
Consequently, the skew group algebra $\Lambda G$ is a free left $\Lambda$-module. 

We now relate this to Linckelmann's notion of separable equivalence of algebras \cite{Linckelmann'11}. 
Note thus that the term ${}_{1}\Lambda_{e}$ in $\oplus_{g \in G} \,  {}_{1}\Lambda_g$ is equal to $\Lambda$. 
Hence, if we let $M$ and $N$ denote respectively $\Lambda G$ considered as a $\Lambda$-$\Lambda G$-bimodule and as a $\Lambda G$-$\Lambda$-bimodule, we see that $\Lambda$ is isomorphic to a direct summand of $M \otimes_{\Lambda G} N$ as a $\Lambda$-$\Lambda$-bimodule. 
On the other hand, provided the order of $G$ is invertible in $k$, it follows by \cite{Reiten-Riedtmann}*{Theorem 1.1} that $N \otimes_\Lambda M$ has a $\Lambda G$-$\Lambda G$-bimodule direct summand isomorphic to $\Lambda G$. Following \cite{Linckelmann'11}, one thus calls $\Lambda$ and $\Lambda G$ \textit{separably equivalent}. 

The following result, the proof of which is essentially \cite{Linckelmann'11}*{Theorem 4.1}, allows us to transfer the finite generation properties between $\Lambda$ and $\Lambda G$. The proof we give indicates which parts of the proof of \cite{Linckelmann'11}*{Theorem 4.1} that must be altered to accommodate our slightly different assumptions. In particular, we indicate how we can circumvent the assumption that the algebras in question must be symmetric. Hence, recall that an algebra $\Lambda$ is \textit{symmetric} if $\Lambda$ and $D(\Lambda)$ are isomorphic as bimodules. 

\begin{prop}\label{skew-group-transfer-prop}
Assume that $\Lambda$ is a $k$-algebra, that the order of $G$ is invertible in $k$, and let $A = \Lambda G$. Then $\Ext_{\Lambda^{e}}(\Lambda,U)$ is Noetherian as an $\HH^*(\Lambda)$-module for any finitely generated $\Lambda^{e}$-module $U$ if and only if $\Ext_{A^{e}}(A,V)$ is Noetherian as an $\HH^*(A)$-module for any finitely generated $A^{e}$-module $V$. In other words, $\Lambda$ is \emph{(\textbf{Fg})} if and only if $A = \Lambda G$ is.
\end{prop}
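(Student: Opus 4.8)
The plan is to exploit the separable equivalence between $\Lambda$ and $A = \Lambda G$ that was just established in the paragraph preceding the proposition. The whole point of separable equivalence is that it provides bimodule transfer functors between the two module categories, and the key structural fact I want to use is that $\Lambda$ is (isomorphic to) a direct summand of $M \otimes_{A} N$ as a $\Lambda$-$\Lambda$-bimodule, and symmetrically $A$ is a direct summand of $N \otimes_\Lambda M$ as an $A$-$A$-bimodule (the latter using that $|G|$ is invertible in $k$). So first I would recall explicitly the two transfer functors $N \otimes_\Lambda - \otimes_\Lambda M$ (from $\Lambda^e$-modules to $A^e$-modules) and $M \otimes_A - \otimes_A N$ (the other way), and record that they send finitely generated modules to finitely generated modules, since $M$ and $N$ are finitely generated projective on the appropriate side.

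\medskip

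The heart of the argument, following \cite{Linckelmann'11}*{Theorem 4.1}, is that these transfer functors induce maps of graded rings and graded modules at the level of Hochschild cohomology and of the relevant $\Ext$-modules, compatibly enough that Noetherianity can be pulled back and forth. Concretely, I would show that tensoring with the bimodules induces a ring homomorphism $\HH^*(\Lambda) \to \HH^*(A)$ (and one in the other direction), and that for a finitely generated $\Lambda^e$-module $U$ the module $\Ext^*_{A^e}(A, N\otimes_\Lambda U \otimes_\Lambda M)$ is, after restriction of scalars along this ring map, a module over $\HH^*(\Lambda)$ that contains $\Ext^*_{\Lambda^e}(\Lambda, U)$ as a direct summand — precisely because $\Lambda$ is a summand of $M \otimes_A N$, so applying $M \otimes_A - \otimes_A N$ and using the summand splitting recovers $U$ up to a summand. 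This ``summand retract'' is what lets me conclude: a direct summand of a Noetherian module is Noetherian, and conversely a module that becomes Noetherian after a finite transfer is itself Noetherian. The argument then runs symmetrically in both directions, giving the ``if and only if.''

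\medskip

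The main obstacle, and the reason this is not a verbatim citation of Linckelmann, is the lack of the symmetry hypothesis. In \cite{Linckelmann'11} the algebras are assumed symmetric, which guarantees that the functors $N \otimes_\Lambda -$ and its adjoint are suitably dual (two-sided adjoint / Frobenius), making the compatibility between the ring maps on Hochschild cohomology and the module structures on the $\Ext$-groups automatic. Here I only know that the order of $G$ is invertible, which is what the paper stresses we may assume in place of symmetry. So the delicate step is to verify that the induced maps on $\HH^*$ and on the $\Ext$-modules are still compatible graded-ring/graded-module homomorphisms and that the splitting of $\Lambda$ off $M\otimes_A N$ is compatible with the Yoneda/cup product structure — in other words, that the retraction is realized by maps of $\HH^*(\Lambda)$-modules and not merely of graded vector spaces. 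I expect to handle this by working at the level of the derived/stable bimodule category, identifying $\Ext^*_{\Lambda^e}(\Lambda,U)$ with graded homomorphisms in the appropriate category and checking that the transfer functors are exact and respect composition; the invertibility of $|G|$ enters exactly where symmetry was previously used, namely in splitting the unit/counit of the relevant adjunction. Once that compatibility is in place, the finite-generation transfer is a formal consequence of the ``Noetherian is inherited by summands'' principle applied in both directions.
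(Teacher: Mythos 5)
Your proposal follows the same route as the paper: run Linckelmann's Theorem 4.1 with the bimodule $N=\Lambda G$ in place of $D(M)$, using the separable equivalence (the summand splittings of $\Lambda$ in $M\otimes_A N$ and of $A$ in $N\otimes_\Lambda M$, the latter from the invertibility of $|G|$) together with the fact that $-\otimes_{\Lambda G}N$ is a two-sided adjoint of $-\otimes_\Lambda M$ to replace the symmetry hypothesis, and then conclude by the Noetherian-retract argument. This is essentially the paper's proof, which just makes the substitutions slightly more explicit (pinpointing Linckelmann's Lemmas 3.2 and 4.5 and noting that $N$ is right projective).
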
 

\begin{proof}
As mentioned, the proof is essentially the same as that of \cite{Linckelmann'11}*{Theorem 4.1} albeit with our $A$-$\Lambda$-bimodule $N$ substituted for $D(M)$.

In particular, note that \cite{Linckelmann'11}*{Theorem 4.1} requires symmetric to use two lemmas: 
\begin{enumerate}[(i)]
    \item \cite{Linckelmann'11}*{Lemma 3.2}; and
    \item \cite{Linckelmann'11}*{Lemma 4.5}.
\end{enumerate}

For (ii), note that the parts of the proof of \cite{Linckelmann'11}*{Lemma 4.5} which use the symmetric assumption are (1): The proof invoking \cite{Linckelmann'11}*{Lemma 4.4}, the content of which is that the symmetric assumption forces the $\Lambda$-$\Lambda G$-bimodule $M$ to have $D(M)$ as both its left and right adjoint, and that this then implies that $D(M) \otimes_\Lambda - \otimes_{\Lambda} M$ as a functor from $\modu \Lambda^{e}$ to $\modu A^{e}$ is left adjoint to the functor $M \otimes_{A} - \otimes_{A} D(M)$; and (2): that symmetric implies that $D(M)$ is right projective since $M$ is left projective.
For (i), note that the conclusion of \cite{Linckelmann'11}*{Lemma 3.2} essentially says that $\Lambda$ and $A$ are respectively bimodule summands of $M \otimes_{A} D(M)$ and $D(M) \otimes_{\Lambda} M$.

Observe then that \cite{Reiten-Riedtmann}*{Theorem 1.1} shows $- \otimes_{\Lambda G} N \simeq \Hom_{\Lambda G}(N, -)$ to be left adjoint to $- \otimes_{\Lambda} M$ in addition to it being its right adjoint by $\otimes$-$\Hom$-adjunction. Consequently, we deduce that $N \otimes_{\Lambda} - \otimes_{\Lambda} M$ is left adjoint to $M \otimes_{A} - \otimes_{A} N$. 
Hence, (ii)(1) is taken care of and (ii)(2) is no issue as the $\Lambda G$-$\Lambda$-bimodule $N = \Lambda G$ is right projective. Consequently, \cite{Linckelmann'11}*{Lemma 4.5} holds with $N$ substituted for $D(M)$.

Moreover, $M$ and $N$ both satisfy the conclusion in the lemma in (i) by the discussion before the proposition. 
\end{proof} 

\begin{remark} 
Note that \cite[Theorem 4.2(2)]{Bergh'23} is a similar result that was shown independently, as is also pointed out in \cite{Bergh'23}.

Also note that the only part of the proposition above that requires any assumption on the field $k$ is the conclusion that $\Lambda$ is (\textbf{Fg}) if and only if $A = \Lambda G$ is (\textbf{Fg}), and for this assuming that $k$ is perfect instead of algbraically closed would suffice; see also \cite[Remark 3.3]{Bergh'23} for a more thorough discussion. 
\end{remark}

Recall now that if $\Lambda$ is a basic self-injective $k$-algebra, we can assume that $\Lambda$ is a Frobenius algebra, implying that there is a $\Lambda^{e}$-module isomorphism $D(\Lambda) \simeq {}_{1}\Lambda_{\nu}$ where $\nu$ is an algebra automorphism of $\Lambda$. 
Here, $\nu$ is what is called the \textit{Nakayama automorphism} of $\Lambda$. 
Recall that $\nu$ is unique up to composition with an inner automorphism, i.e.\ an automorphism induced by conjugation with an invertible element of $\Lambda$.
Note that $\Lambda$ is symmetric if and only if it is Frobenius and its Nakayama automorphism is an inner automorphism. 

It is well known that for an algebra $\Lambda$ to be Frobenius is equivalent to there existing a non-degenerate associative bilinear form $\langle - , - \rangle \colon \Lambda \times \Lambda \rightarrow k$, in which associative means that $\langle \lambda, \lambda' \lambda'' \rangle = \langle \lambda \lambda', \lambda'' \rangle$. For example, the group algebra $kG$ is Frobenius with the bilinear form defined on the basis of group elements $g, h \in G$ by $\langle g, h \rangle = 1$ if $gh = e$ and $0$ otherwise. 

Using this, one can show that if $\Lambda$ is Frobenius then $\Lambda G$ is as well: One defines a bilinear form on $\Lambda G$ by letting
\[\langle \lambda \otimes g, \lambda' \otimes g' \rangle_{\Lambda G} :=  \langle \lambda, g(\lambda') \rangle_{\Lambda} \cdot \langle g, g'\rangle_{kG}.\]
Note that we here use subscripts to denote which algebra a given form is defined for, but often omit these in the sequel as it will cause no ambiguity. 

It is straightforward to see that this form is associative, and so we need only check non-degeneracy: Assume thus that $x = \sum_{g \in G} \lambda_g \otimes g$ satisfies $\langle x, y \rangle = 0$ for all $y \in \Lambda G$. 
Hence, we deduce that
\begin{align*}
\langle x, \lambda'\otimes h^{-1} \rangle 
& = \langle \lambda_{h}, h(\lambda') \rangle \cdot \langle h, h^{-1} \rangle \\
& = \langle \lambda_{h}, h(\lambda') \rangle \\
& = 0
\end{align*}
for all all $\lambda' \in \Lambda$. But $h$ acts on elements of $\Lambda$ by its image in $\operatorname{Aut} \Lambda$, i.e.\ via an algebra automorphism, and so we get that $\langle x, \Lambda \rangle = 0$, a contradiction. 

We also need the following convenient lemma.

\begin{lem}\label{Holm-Zim-lem}\cite{Holm-Zimmermann}*{Lemma 2.7}
Let $k$ be a field and let $\Lambda$ be a finite dimensional Frobenius $k$-algebra with associated bilinear form $\langle -,- \rangle$. Then the Nakayama automorphism $\nu$ of $\Lambda$, satisfies $\langle \lambda, \lambda'\rangle = \langle \lambda', \nu(\lambda) \rangle$ for all $\lambda,\lambda' \in \Lambda$, and any automorphism satisfying this formula is a Nakayama automorphism.
\end{lem}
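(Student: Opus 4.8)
The plan is to pass everything through the canonical map between $\Lambda$ and its dual furnished by the bilinear form, and to reinterpret the defining property of the Nakayama automorphism, namely the bimodule isomorphism $D(\Lambda) \simeq {}_{1}\Lambda_{\nu}$, as a statement about that map. Concretely, I would introduce $\psi \colon \Lambda \to D(\Lambda)$ defined by $\psi(\lambda) = \langle -, \lambda \rangle$, that is $\psi(\lambda)(x) = \langle x, \lambda \rangle$, and recall that $D(\Lambda)$ carries its standard bimodule structure $(a \cdot f \cdot b)(x) = f(bxa)$, so that in particular $(a \cdot f)(x) = f(xa)$ and $(f \cdot b)(x) = f(bx)$.

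First I would check that $\psi$ is an isomorphism of left $\Lambda$-modules. Associativity of the form gives $(a \cdot \psi(\lambda))(x) = \psi(\lambda)(xa) = \langle xa, \lambda \rangle = \langle x, a\lambda \rangle = \psi(a\lambda)(x)$, so $\psi$ is left $\Lambda$-linear, while non-degeneracy of $\langle -, - \rangle$ makes it injective and hence bijective by a dimension count. The heart of the argument is then the comparison of the two right actions. Since $\psi$ is a left-linear isomorphism, it is a bimodule isomorphism ${}_{1}\Lambda_{\nu} \xrightarrow{\sim} D(\Lambda)$ precisely when it intertwines the right actions, i.e.\ when $\psi(\lambda \nu(b)) = \psi(\lambda) \cdot b$ for all $\lambda, b \in \Lambda$. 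Evaluating both sides at an arbitrary $x$ and using associativity twice gives $\psi(\lambda\nu(b))(x) = \langle x, \lambda\nu(b)\rangle = \langle x\lambda, \nu(b)\rangle$ and $(\psi(\lambda)\cdot b)(x) = \langle bx, \lambda\rangle = \langle b, x\lambda\rangle$, so the intertwining condition becomes $\langle x\lambda, \nu(b) \rangle = \langle b, x\lambda \rangle$. As $x\lambda$ ranges over all of $\Lambda$ (take $\lambda = 1$), this is equivalent to $\langle y, \nu(b) \rangle = \langle b, y \rangle$ for all $y, b \in \Lambda$, which is exactly the asserted formula after renaming.

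This single computation proves both halves at once. For the converse direction, if $\nu$ is any algebra automorphism satisfying the formula, the displayed equivalence shows that $\psi$ is right $\Lambda$-linear, hence a bimodule isomorphism ${}_{1}\Lambda_{\nu} \simeq D(\Lambda)$, so $\nu$ is a Nakayama automorphism. For the forward direction, $\psi$ transports the right module structure of $D(\Lambda)$ to a twisted right structure on $\Lambda$ compatible with ordinary left multiplication, which is ${}_{1}\Lambda_{\nu}$ for the $\nu$ produced by the computation, and non-degeneracy guarantees this $\nu$ is unique.

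The only real subtlety, and the point I would treat with care, is the ambiguity of the Nakayama automorphism up to an inner automorphism: a general bimodule isomorphism $D(\Lambda) \simeq {}_{1}\Lambda_{\nu'}$ need not be $\psi$, but it differs from $\psi$ by a left $\Lambda$-module automorphism of $\Lambda$, hence by right multiplication by a unit $u$, from which one reads off $\nu' = u\,\nu(-)\,u^{-1}$. Thus the formula singles out exactly the representative determined by the chosen form, consistent with the cited uniqueness up to inner automorphisms. Beyond this, the main thing to keep straight is the bimodule bookkeeping, namely on which side $\nu$ twists and the precise $f \mapsto f(bxa)$ structure on $D(\Lambda)$, since an error there would merely swap the two arguments of the form and flip the formula.
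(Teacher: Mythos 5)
Your proof is correct. Note that the paper itself gives no argument for this statement --- it is imported verbatim as \cite{Holm-Zimmermann}*{Lemma 2.7} --- and your argument (identify $\Lambda \to D(\Lambda)$, $\lambda \mapsto \langle -,\lambda\rangle$ as a left-module isomorphism via associativity and non-degeneracy, then observe that intertwining the right actions is literally equivalent to the formula $\langle \lambda,\lambda'\rangle = \langle \lambda',\nu(\lambda)\rangle$, with the inner-automorphism ambiguity accounted for by composing with right multiplication by a unit) is exactly the standard proof of that cited lemma, and is moreover consistent with how the paper later applies it in \cref{skew-group-is-sym-lem} and in the twisted trivial extension computation.
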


The next result determines the Nakayama automorphism of $\Lambda G$ whenever $\Lambda$ has a Nakayama automorphism of finite order and $G$ is generated by $\nu$.

\begin{prop}\label{skew-group-is-sym-lem}
Let $\Lambda$ be a basic self-injective algebra with Nakayama automorphism $\nu$ of finite order, and let $G$ be the finite cyclic group generated by $\nu$. Then $\Lambda G$ is symmetric. 
\end{prop}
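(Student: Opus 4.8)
The plan is to exploit the characterization recalled just above the statement: a Frobenius algebra is symmetric precisely when its Nakayama automorphism is inner. Since the preceding discussion already equips $A = \Lambda G$ with an explicit associative, non-degenerate form $\langle -, - \rangle_{\Lambda G}$, and hence shows $A$ is Frobenius, it suffices to exhibit one Nakayama automorphism of $A$ and check that it is inner. My candidate is the map $c(\lambda \otimes g) = \nu(\lambda) \otimes g$. A short computation shows that $c$ is nothing but conjugation by the unit $1 \otimes \nu \in \Lambda G$: indeed $(1 \otimes \nu)(\lambda \otimes g)(1 \otimes \nu^{-1}) = \nu(\lambda) \otimes \nu g \nu^{-1} = \nu(\lambda) \otimes g$, where the final equality uses that $G = \langle \nu \rangle$ is abelian. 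Thus $c$ is automatically an algebra automorphism and is inner, and the entire problem reduces to verifying that $c$ really is a Nakayama automorphism of $A$.

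To carry this out I would invoke \Cref{Holm-Zim-lem}: it is enough to check that $\langle x, y \rangle_{\Lambda G} = \langle y, c(x) \rangle_{\Lambda G}$ for $x = \lambda \otimes g$ and $y = \lambda' \otimes g'$ ranging over the standard spanning elements. Unwinding both sides with the definition of $\langle -, - \rangle_{\Lambda G}$, and using that $\langle g, g' \rangle_{kG} = \langle g', g \rangle_{kG}$ is nonzero exactly when $g' = g^{-1}$, the verification collapses to the single identity $\langle \lambda, g(\lambda') \rangle_\Lambda = \langle \lambda', g^{-1}(\nu(\lambda)) \rangle_\Lambda$ in $\Lambda$.

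The crux, and the step I expect to be the main obstacle, is establishing this last identity, for which I first need that $\nu$ is an isometry of its own form. This follows formally from \Cref{Holm-Zim-lem} applied to $\Lambda$: writing the defining relation $\langle x, y \rangle_\Lambda = \langle y, \nu(x) \rangle_\Lambda$ twice yields $\langle a, b \rangle_\Lambda = \langle b, \nu(a) \rangle_\Lambda = \langle \nu(a), \nu(b) \rangle_\Lambda$, so $\nu$ preserves the form; iterating, every $g \in G$ does as well, whence $\langle g(a), b \rangle_\Lambda = \langle a, g^{-1}(b) \rangle_\Lambda$. Granting this, the required identity is immediate, since $\langle \lambda, g(\lambda') \rangle_\Lambda = \langle g(\lambda'), \nu(\lambda) \rangle_\Lambda = \langle \lambda', g^{-1}(\nu(\lambda)) \rangle_\Lambda$, the first equality by the Nakayama relation on $\Lambda$ and the second by the isometry property. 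This completes the check that $c$ is a Nakayama automorphism of $A$, and as $c$ is inner we conclude that $A = \Lambda G$ is symmetric.
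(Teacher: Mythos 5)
Your proposal is correct and follows essentially the same route as the paper: both verify via \cref{Holm-Zim-lem} that $\lambda \otimes g \mapsto \nu(\lambda)\otimes g$ is a Nakayama automorphism of $\Lambda G$ (the paper's displayed computation with $g=\nu^i$ is exactly your reduction plus the isometry property of $\nu$, which you usefully make explicit) and both then observe that this map is conjugation by the unit $1\otimes\nu$, hence inner, so $\Lambda G$ is symmetric.
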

\begin{proof}
We compute as follows
\begin{align*}
\langle \lambda \otimes \nu^{i}, \lambda'\otimes \nu^{-i} \rangle 
& = \langle \lambda, \nu^{i}(\lambda') \rangle \cdot \langle \nu^{i}, \nu^{-i} \rangle \\
& = \langle \lambda, \nu^{i}(\lambda') \rangle \\
& = \langle \nu^{i}(\lambda'), \nu(\lambda) \rangle \\
& = \langle \lambda',\nu^{-i + 1}(\lambda) \rangle \\
& = \langle \lambda' \otimes  \nu^{-i},  \nu(\lambda) \otimes \nu^{i} \rangle.
\end{align*}
This shows that 
$$\nu_{\Lambda G}(\lambda \otimes \nu^{i}) = \nu(\lambda) \otimes \nu^{i}$$ 
is a Nakayama automorphism of $\Lambda G$ by \cref{Holm-Zim-lem} as long as it is an automorphism of the algebra. 

Observe now that $1 \otimes \nu \in \Lambda G$ is invertible, and that it defines an inner automorphism 
\begin{align*}
\nu(\lambda) \otimes \nu^{i} & 
= (1 \otimes \nu) \cdot (\lambda \otimes \nu^{i}) \cdot (1 \otimes \nu^{-1}).
\end{align*} 
We are hence done as this implies that we can choose $\Lambda G$ to have the identity as its Nakayama automorphism.
\end{proof}

We also need the following result.

\begin{prop}\label{basic-version-also-sym-prop} \cite{Giovannini-Pasquali}*{Lemma 2.7}
If $\Lambda$ is a symmetric algebra and $\eta$ is some idempotent of $\Lambda$, then $\eta \Lambda \eta$ is symmetric. 
\end{prop}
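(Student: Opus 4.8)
The plan is to produce an associated bilinear form on $\eta\Lambda\eta$ by restriction. Since $\Lambda$ is symmetric, I may choose its associated non-degenerate associative bilinear form $\langle -, - \rangle$ to be symmetric, i.e.\ $\langle \lambda, \lambda' \rangle = \langle \lambda', \lambda \rangle$: symmetric means the Nakayama automorphism may be taken to be the identity, so the formula $\langle \lambda, \lambda'\rangle = \langle \lambda', \nu(\lambda)\rangle$ of \cref{Holm-Zim-lem} becomes precisely the symmetry condition. Observing that $\eta\Lambda\eta$ is an algebra with identity $\eta$, the natural candidate is the restriction $\langle a, b \rangle_{\eta\Lambda\eta} := \langle a, b\rangle_{\Lambda}$ for $a, b \in \eta\Lambda\eta$, and the goal is to verify that this is again a non-degenerate, associative, symmetric bilinear form, so that $\eta\Lambda\eta$ is symmetric.

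First I would dispatch the formal properties. Bilinearity is immediate, symmetry is inherited verbatim from that of $\langle -, -\rangle$ on $\Lambda$, and associativity follows from associativity on $\Lambda$ together with the fact that $\eta\Lambda\eta$ is closed under multiplication, so that $ab, bc \in \eta\Lambda\eta$ whenever $a, b, c \in \eta\Lambda\eta$.

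The crux is non-degeneracy, and this is where I expect the real work to lie. Suppose $a \in \eta\Lambda\eta$ satisfies $\langle a, b\rangle = 0$ for every $b \in \eta\Lambda\eta$; I must show $a = 0$. Since $a = \eta a \eta$, idempotency of $\eta$ gives $\eta a = a\eta = a$. The key claim is that
\[\langle a, x \rangle = \langle a, \eta x \eta\rangle \qquad \text{for every } x \in \Lambda,\]
which I would establish by alternately applying associativity and symmetry to shift copies of $\eta$ across the two arguments; concretely
\[\langle a, x\rangle = \langle a\eta, x\rangle = \langle a, \eta x\rangle = \langle \eta x, a\rangle = \langle \eta x, \eta a\rangle = \langle \eta x \eta, a\rangle = \langle a, \eta x\eta\rangle.\]
Granting this claim, taking $b = \eta x \eta \in \eta\Lambda\eta$ in the hypothesis yields $\langle a, x\rangle = 0$ for all $x \in \Lambda$, whence $a = 0$ by non-degeneracy of $\langle -, -\rangle$ on $\Lambda$.

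With non-degeneracy in hand, the restricted form witnesses that $\eta\Lambda\eta$ is symmetric, completing the proof. The only genuine subtlety is the interplay of associativity, symmetry and idempotency in the displayed chain of equalities; everything else is formal. Alternatively, one could argue more abstractly via the $\eta\Lambda\eta$-bimodule isomorphisms $D(\eta\Lambda\eta) \cong \eta\, D(\Lambda)\, \eta \cong \eta\Lambda\eta$, the last step using that $\Lambda$ is symmetric, but the bilinear-form computation is more self-contained and matches the setup already developed above.
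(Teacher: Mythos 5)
Your proof is correct: the displayed chain of equalities, alternating associativity and symmetry of the form with $a=\eta a\eta$, does establish non-degeneracy of the restricted form on $\eta\Lambda\eta$, and the remaining properties are indeed formal (note that the symmetry of the form is genuinely needed here, which is consistent with the fact that the analogous statement can fail for merely Frobenius algebras). The paper gives no proof of this proposition, only a citation to the literature, and the restriction-of-the-symmetrizing-form argument you give is precisely the standard one, so your proposal matches the intended approach.
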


We are not yet ready to show the main (and only!) result of this note, namely that we can reduce the classification of basic radical-cube-zero self-injective (\textbf{Fg}) algebras to the classification in the weakly symmetric case in all cases except those of type $\widetilde{A}_n$ provided the characteristic is good. 
In particular, we need to show that all self-injective radical-cube-zero algebras that are not of type $\widetilde{A}_n$ have a Nakayama automorphism that is of finite order. 

Note that an algebra $\Lambda$ is called \textit{radical-cube-zero} if it satisfies $\rad^3 \Lambda = 0$. 
Moreover, we can ignore those algebras which satisfy $\rad^2 \Lambda = 0$ in the basic self-injective case which is of interest to us for reasons we now explain. 
Indeed, we can restrict to those algebras $\Lambda$ which are representation-infinite since the ones which are representation-finite are already known to be (\textbf{Fg}) essentially by the main result in \cite{Dugas'10} via the methods in \cite{Green-et-al-2}. 
If $\rad^2 \Lambda = 0$ holds with $\Lambda$ basic self-injective, then one can check that $\Lambda$ must be a Nakayama algebra, and hence of finite representation type; see e.g.\ \cite{Erdmann-Solberg-1}*{Proposition 1.4}.

By \cite{Martinez-Villa-survey}, any representation-infinite self-injective radical-cube-zero algebra is Koszul; see \cite{Beilinson-Ginzburg-Soergel} or \cite{Martinez-Villa-survey} for a definition. 
In particular, this implies that it must be a quadratic algebra and hence have quadratic relations if it is basic. 
Moreover, note that a Koszul algebra $\Lambda$ is a \textit{positively graded algebra}, meaning that as a $k$-vector space $\Lambda$ can be expressed as a direct sum of the form $\oplus_{i \in \mathbb{N}} \Lambda_i$ satisfying that $\Lambda_i \cdot \Lambda_j \subseteq \Lambda_{i + j}$.
Also note that a finite dimensional Koszul algebra is isomorphic as a graded algebra to its associated graded with respect to the radical filtration by \cite{Beilinson-Ginzburg-Soergel}*{Proposition 2.5.1}.

We want to combine this with some information on the structure of the quivers of self-injective radical-cube-zero algebras.
Following \cite{Erdmann-Solberg-2}*{Definition 7.1}, the \textit{type} of such an algebra $\Lambda$ is the underlying graph of the separated quiver of $A = \Lambda/\soc \Lambda$, which is a radical-square-zero algebra. Following \cite{ARS97}, the separated quiver of a radical-square-zero algebra $A$ is given by the quiver of the bipartite hereditary algebra
\[
\begin{bmatrix}
A/\rad A & \rad A \\
0 & A/\rad A
\end{bmatrix}.
\]
Using the aforementioned \cite{Beilinson-Ginzburg-Soergel}*{Proposition 2.5.1}, we get that this is isomorphic to
\[
\nabla \Lambda :=  
\begin{bmatrix}
\Lambda_0 & \Lambda_1 \\
0 & \Lambda_0
\end{bmatrix},
\]
i.e.\ the Beilinson algebra of $\Lambda$ as in \cites{Yama13, MM11}.

Recall now that the $2$\textit{-quasi-Veronese} of a graded algebra $\Lambda$ (as in \cite{MM11}*{Definition 4.6.2}) of highest degree $2$ is 
\[
\Lambda^{[2]} :=  
\begin{bmatrix}
\Lambda_0 & \Lambda_1 \\
0 & \Lambda_0
\end{bmatrix}
\oplus
\begin{bmatrix}
\Lambda_2 & 0 \\
\Lambda_1 & \Lambda_2
\end{bmatrix}.
\]

The essential idea in the following is that we can replace $\Lambda$ by its $2$-quasi-Veronese $\Lambda^{[2]}$ to get what we can think of as a normal form of $\Lambda$ which is easier to work with.

Recall now that given a graded module $M= \oplus_{i\in\Z}M_i$, one defines the \textit{$j$-th graded shift} of $M$ to be the graded module $M\langle j \rangle$ with $M\langle j \rangle_i = M_{i-j}$.
Since $\Lambda$ is Frobenius and its socle lies in its highest degree, i.e.\ 2, we have that $D\Lambda \langle -2 \rangle \simeq {}_{1}\Lambda_{\nu}$ as graded bimodules by \cite{Haugland-Sandoy}*{Lemma 2.2}. 
Consequently, we get $\Lambda_2 \simeq {}_{1}D(\Lambda_0)_{\nu^{-1}}$ and $\Lambda_1 \simeq {}_{1}D(\Lambda_1)_{\nu^{-1}}$.
Note that we here abuse notation by letting $\nu^{-1}$ denote both the inverse of the Nakayama automorphism of $\Lambda$ and the automorphism the inverse induces on $\Lambda^{[2]}$. 
Using this, we deduce that 
\[
\Lambda^{[2]} \simeq 
\begin{bmatrix}
\Lambda_0 & \Lambda_1 \\
0 & \Lambda_0
\end{bmatrix}
\oplus
\begin{bmatrix}
{}_{1}D(\Lambda_0)_{\nu^{-1}}& 0 \\
{}_{1}D(\Lambda_1)_{\nu^{-1}} & {}_{1}D(\Lambda_0)_{\nu^{-1}}
\end{bmatrix}.
\]

In other words, $\Lambda^{[2]}$ is the twisted trivial extension of $(\Lambda^{[2]})_0$ with respect to the degree $0$ part of $\nu^{-1}$, a notion we recall now: 
Given a finite dimensional algebra $A$, the \textit{trivial extension} of $A$ is $\Delta A :=  A \oplus DA$ as a vector space. 
It is an algebra with multiplication $(a, f) \cdot (b, g) = (ab, ag + fb)$ for $a,b \in A$ and $f,g \in DA$. 
Observe that $\Delta A$ is a symmetric algebra with a non-degenerate bilinear form induced by setting $\langle a, f \rangle 
:=  (a\cdot f)(1)$ and $\langle f, a \rangle 
:=  (f\cdot a)(1)$ for $a \in A$ and $f \in DA$
and extending bilinearly. 

Similarly, if $\sigma$ is an algebra automorphism of $A$, one defines the \textit{twisted trivial extension} of $A$ \textit{with respect to} $\sigma$ as the vector space $\Delta_{\sigma} A :=  A \oplus {}_{1}D(A)_{\sigma}$ endowed with the same multiplication as for the usual trivial extension. 
Note, however, that $\Delta_{\sigma} A$ is not necessarily symmetric, only Frobenius. 
This is well known, but we include a proof for the convenience of the reader:

If $A$ is some algebra and $\sigma$ is an automorphism of $A$, then the twisted trivial extension of $A$ with respect to $\sigma$ has Nakayama automorphism given essentially by $\sigma^{-1}$, and we abuse notation and let it be denoted by $\sigma^{-1}$, too. 
To see this, let $\sigma^{-1}$ act on $a \in A$ as you would expect while $\sigma^{-1}$ acts on $f \in DA$ by $\sigma^{-1}(f) = f \circ \sigma$. 
One can check that setting $\langle a, f \rangle 
:=  (a\cdot f)(1)$ and $\langle f, a \rangle 
:=  (f\cdot a)(1)$ for $a \in A$ and $f \in DA$ induces a non-degenerate associative bilinear form for $\Delta_{\sigma} A$. 

Note then that 
\[
\langle a, f \rangle 
= (a\cdot f)(1)
= f(a)
= (f \cdot \sigma^{-1}(a))(1) 
= \langle f, \sigma^{-1}(a) \rangle
\]
and 
\[
\langle f, a \rangle 
= (f\cdot a)(1)
= f(\sigma(a))
= (a \cdot f \circ \sigma)(1)  
= \langle a, \sigma^{-1}(f) \rangle,
\]
so the claim follows by \cref{Holm-Zim-lem}, and we have hence shown: 

\begin{prop}
The twisted trivial extension of $A$ with respect to the automorphism $\sigma$ has Nakayama automorphism given by $\sigma^{-1}$, where $\sigma^{-1}$ acts on elements of $A$ as one would expect and $\sigma^{-1}(f) = f \circ \sigma$ for $f \in D(A)_{\sigma}$.
\end{prop}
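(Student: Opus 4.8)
The plan is to reduce everything to the characterization of Nakayama automorphisms recorded in \cref{Holm-Zim-lem}: once $\Delta_{\sigma} A$ carries a non-degenerate associative bilinear form, it suffices to produce an algebra automorphism, which following the abuse of notation in the statement I denote $\sigma^{-1}$, satisfying $\langle x, y \rangle = \langle y, \sigma^{-1}(x) \rangle$ for all $x, y$, and the lemma then forces this map to be a Nakayama automorphism. I would therefore split the argument into three tasks: equip $\Delta_{\sigma} A$ with its Frobenius form, confirm that the proposed map is genuinely an algebra automorphism, and check the characterizing identity.

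First I would record the form. Writing elements of $\Delta_{\sigma} A = A \oplus {}_1 D(A)_{\sigma}$ in their two components, I set $\langle a, f \rangle := (a \cdot f)(1)$ and $\langle f, a \rangle := (f \cdot a)(1)$ for $a \in A$ and $f \in D(A)$, the pairings on $A \times A$ and on $D(A) \times D(A)$ being zero since $D(A)$ is a square-zero ideal and a product of two elements of $A$ has no $D(A)$-component. Unwinding the bimodule structure of ${}_1 D(A)_{\sigma}$, in which the left action is untwisted and the right action of $b$ equals the standard right action of $\sigma(b)$, yields the explicit values $\langle a, f \rangle = f(a)$ and $\langle f, a \rangle = f(\sigma(a))$. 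Non-degeneracy is immediate from the perfect pairing between $A$ and its dual, and associativity is a direct check on the finitely many types of triple products; this is the routine part one simply verifies.

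The genuine content, and the step most exposed to twist-bookkeeping errors, is confirming that the map acting as $\sigma^{-1}$ on $A$ and as $f \mapsto f \circ \sigma$ on $D(A)$ is an algebra automorphism of $\Delta_{\sigma} A$. Bijectivity is clear, with inverse $\sigma$ on $A$ and $g \mapsto g \circ \sigma^{-1}$ on $D(A)$, so the issue is multiplicativity: I would compare the image of the product $(a,f)(b,g) = (ab,\, a \cdot g + f \cdot b)$ with the product of the images, and check the two components agree, using that $\sigma^{-1}$ is an algebra map on $A$ and that precomposition with $\sigma$ is compatible with both the untwisted left action and the $\sigma$-twisted right action on $D(A)$. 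Once multiplicativity holds, the characterizing identity collapses to the two mixed cases $\langle a, f \rangle = f(a) = f(\sigma(\sigma^{-1}(a))) = \langle f, \sigma^{-1}(a) \rangle$ and $\langle f, a \rangle = f(\sigma(a)) = (f \circ \sigma)(a) = \langle a, \sigma^{-1}(f) \rangle$, the remaining two cases reading $0 = 0$, and \cref{Holm-Zim-lem} then completes the proof.
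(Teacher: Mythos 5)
Your proposal is correct and follows essentially the same route as the paper: the same Frobenius form $\langle a, f \rangle = (a\cdot f)(1)$, $\langle f, a \rangle = (f\cdot a)(1)$, the same two mixed-case computations $f(a) = \langle f, \sigma^{-1}(a)\rangle$ and $f(\sigma(a)) = \langle a, \sigma^{-1}(f)\rangle$, and the same appeal to \cref{Holm-Zim-lem}. Your explicit insistence on verifying that the map is multiplicative is a detail the paper leaves implicit (it is needed, since \cref{Holm-Zim-lem} only applies to algebra automorphisms), but it does not change the argument.
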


Consequently, we see that the possible forms of the Nakayama automorphisms of a $2$-quasi-Veronese $\Lambda^{[2]}$ should be determined by the automorphisms of the algebra $\Lambda^{[2]}_0$.
Also note that $\Lambda^{[2]}_0$ is a bipartite hereditary algebra whenever $\Lambda$ is basic radical-cube-zero self-injective, and that the type of $\Lambda^{[2]}_0$ coincides with the type of $\Lambda$.
Our aim to is to use this to somehow exploit the fact that tame hereditary algebras have particularly nice groups of outer automorphisms, and especially so if they are not of type $\widetilde{A}_n$.

To achieve this, we first show that when checking if $\Lambda$ is (\textbf{Fg}) we can reduce to working with the $2$-quasi-Veronese of $\Lambda$ provided the characteristic is not $2$. 
To do this, we begin by noting that $\Lambda^{[2]}$ can be identified with a smash product of $\Lambda$ with the group $\mathbb{Z}_2$, a notion we now recall: First, recall  that if $G$ is a group, the notion of a $G$-\textit{graded algebra} $\Lambda$ is defined similarly to that of a positively graded algebra, i.e.\ $\Lambda$ is $G$-graded if it can be expressed as a direct sum $\Lambda = \oplus_{g \in G} \Lambda_g$ such that $\Lambda_g \cdot \Lambda_h \subseteq \Lambda_{gh}$. 
We now let $\{p_g \in kG^* \, \lvert \, g \in G \}$ be the dual basis of $kG^* :=  \Hom_k(kG, k)$, where $G$ is a finite group, and recall that 
\textit{the smash product} $\Lambda \# G^*$ of a $G$-graded algebra $\Lambda = \oplus_{g \in G} \Lambda_g$ with the group $G$ is the finite dimensional algebra with underlying vector space $\Lambda \otimes_k kG^*$ and multiplication given by 
\[
\lambda p_g \cdot \lambda' p_h = \lambda \lambda'_{g h^{-1}} p_h 
\]
wherein $\lambda' = \sum_{g \in G} \lambda'_{g}$, i.e.\ $\lambda'_{gh^{-1}}$ is the degree $gh^{-1}$ part of $\lambda'$ and we have omitted the notation for the tensor products. 
Using this, we have the following.

\begin{prop}
Let $\Lambda = \oplus_{0 \leq i \leq 2} \Lambda_i$ have the $\mathbb{Z}_2$-grading given by letting $\Lambda_0 \oplus \Lambda_2$ be in degree $0$ and $\Lambda_1$ be in degree $1$. 
Then $\Lambda^{[2]} \simeq \Lambda \# \mathbb{Z}_2^*$.  
\end{prop}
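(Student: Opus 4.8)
The plan is to exhibit an explicit algebra isomorphism by realizing $\Lambda \# \mathbb{Z}_2^*$ as a $2\times 2$ matrix algebra via a Peirce decomposition, and then matching it block-by-block with the quasi-Veronese $\Lambda^{[2]}$. First I would record that the $\mathbb{Z}_2$-grading in the statement really is an algebra grading. Writing $\bar{\Lambda}_0 := \Lambda_0 \oplus \Lambda_2$ for the degree-$0$ part and $\bar{\Lambda}_1 := \Lambda_1$ for the degree-$1$ part, the only checks beyond the evident ones are $\bar{\Lambda}_1 \cdot \bar{\Lambda}_1 = \Lambda_1 \Lambda_1 \subseteq \Lambda_2 \subseteq \bar{\Lambda}_0$ and $\Lambda_2 \Lambda_1 \subseteq \Lambda_3 = 0$, both of which hold since $\rad^3 \Lambda = 0$; thus $(\Lambda_0 \oplus \Lambda_2) \oplus \Lambda_1$ respects the group operation of $\mathbb{Z}_2$.

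Next I would set $e_0 := 1\, p_0$ and $e_1 := 1\, p_1$ in $\Lambda \# \mathbb{Z}_2^*$, where $1 \in \Lambda_0 = \bar{\Lambda}_0$ is the identity of $\Lambda$. Using the multiplication rule $\lambda p_g \cdot \lambda' p_h = \lambda\, \lambda'_{g-h}\, p_h$ (with $g-h$ computed in $\mathbb{Z}_2$) and the fact that $1$ is $\mathbb{Z}_2$-homogeneous of degree $0$, a direct computation gives $e_g e_h = \delta_{g,h}\, e_h$, so $e_0, e_1$ are orthogonal idempotents whose sum $\sum_{g} 1\, p_g$ is the identity of the smash product. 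I would then compute the Peirce blocks: right multiplication by $e_j$ kills every $p_h$-component with $h \neq j$, while left multiplication by $e_g$ extracts the degree-$(g-j)$ part, so that
\[
e_g\,(\Lambda \# \mathbb{Z}_2^*)\,e_j = \bar{\Lambda}_{g-j}\, p_j \cong \bar{\Lambda}_{g-j}.
\]
Reading off the four blocks yields a linear isomorphism, call it $\phi$, onto
\[
\begin{bmatrix}
\bar{\Lambda}_0 & \bar{\Lambda}_1 \\
\bar{\Lambda}_1 & \bar{\Lambda}_0
\end{bmatrix}
=
\begin{bmatrix}
\Lambda_0 \oplus \Lambda_2 & \Lambda_1 \\
\Lambda_1 & \Lambda_0 \oplus \Lambda_2
\end{bmatrix},
\]
which is exactly the underlying space of $\Lambda^{[2]}$ after relabelling the indices $\{0,1\}$ as $\{1,2\}$.

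It then remains to check that $\phi$ intertwines the two multiplications. For a block element $a\, p_h$ with $a \in \bar{\Lambda}_{g-h}$ and a block element $b\, p_j$ with $b \in \bar{\Lambda}_{h'-j}$, the rule gives $(a p_h)(b p_j) = a\, b_{h-j}\, p_j$, which is nonzero only when $h = h'$, in which case it equals $(ab)\, p_j$ with $ab \in \bar{\Lambda}_{g-j}$ landing in the $(g,j)$-block; this is precisely the matrix product of the matrix units $a E_{g,h}$ and $b E_{h',j}$. Comparing with the $\mathbb{N}$-graded matrix multiplication of $\Lambda^{[2]}$ then finishes the argument, once I observe that the refinement $\bar{\Lambda}_0 = \Lambda_0 \oplus \Lambda_2$ splits the diagonal blocks exactly into the two matrix summands displayed in the definition of $\Lambda^{[2]}$, and that $\phi$ carries $e_0 + e_1$ to the identity matrix.

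I expect the main, and essentially only, obstacle to be bookkeeping rather than anything conceptual: one must keep the two gradings straight, since products of the off-diagonal degree-$1$ pieces $\Lambda_1$ reenter the diagonal blocks through $\Lambda_1 \Lambda_1 \subseteq \Lambda_2$, and one must be careful with the index convention in $\lambda'_{g-h}$ and with the shift $\{0,1\} \to \{1,2\}$ relating the group elements to the matrix rows and columns. Everything else is a routine verification that the block identification $\phi$ respects units and products.
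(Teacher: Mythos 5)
Your proposal is correct and follows essentially the same route as the paper: the paper's proof consists precisely of displaying the block decomposition of $\Lambda \# \mathbb{Z}_2^*$ into the pieces $\Lambda_i p_j$, matching it against the displayed decomposition of $\Lambda^{[2]}$, and checking that the multiplications agree. Your use of the orthogonal idempotents $e_g = 1\,p_g$ and the resulting Peirce blocks $e_g(\Lambda \# \mathbb{Z}_2^*)e_j = \bar{\Lambda}_{g-j}\,p_j$ simply makes explicit the bookkeeping that the paper leaves to the reader.
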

\begin{proof}
This follows by noting that
\[
\Lambda \# \mathbb{Z}_2^* \simeq 
\begin{bmatrix}
\Lambda_0p_0 & \Lambda_1p_1 \\
0 & \Lambda_0p_1
\end{bmatrix}
\oplus
\begin{bmatrix}
\Lambda_2p_0 & 0 \\
\Lambda_1p_0 & \Lambda_2 p_1
\end{bmatrix},
\]
comparing this with the decomposition given above for the $2$-quasi-Veronese, and checking that the multiplications of the two agree.
\end{proof}

The smash product $\Lambda \# G^*$ has an action of the group $G$ given by 
$(\lambda p_h)^g = \lambda p_{hg}$ for $\lambda \in \Lambda$, $p_h \in kG^*$ and $g \in G$ by \cite{Cohen-Montgomery}*{Lemma 3.3}. 
Hence, one can form the skew group algebra $(\Lambda \# G^*)G$, and this is Morita equivalent to $\Lambda$ by \cite{Cohen-Montgomery}*{Theorem 3.5}. 
Since the (\textbf{Fg}) property is preserved by Morita equivalence by the main result in \cite{Kulshammer-Psaroudakis-Skartsaeterhagen}, it follows by \cref{skew-group-transfer-prop} that if the characteristic is different from $2$, then $\Lambda$ is (\textbf{Fg}) if and only if $\Lambda^{[2]}$ is, and we record this in the following proposition.  

\begin{prop}\label{quasi-vero-transfer-prop}
Let $\Lambda = \oplus_{0 \leq i \leq 2} \Lambda_i$ have the $\mathbb{Z}_2$-grading given by letting $\Lambda_0 \oplus \Lambda_2$ be in degree $0$ and $\Lambda_1$ be in degree $1$. 
If the characteristic is different from $2$, then $\Lambda$ is \emph{(\textbf{Fg})} if and only $\Lambda^{[2]}$ is \emph{(\textbf{Fg})}.
\end{prop}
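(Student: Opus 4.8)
The plan is to assemble a short chain of equivalences, each of which follows either from an algebra isomorphism, a Morita equivalence, or the separable-equivalence transfer result established in \cref{skew-group-transfer-prop}. The one structural observation is that the hypothesis $\operatorname{char} k \neq 2$ is precisely what makes $|\mathbb{Z}_2| = 2$ invertible in $k$, which is the standing assumption required to apply \cref{skew-group-transfer-prop}; so I expect the characteristic to enter at exactly one point and nowhere else.

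First I would record the identification $\Lambda^{[2]} \simeq \Lambda \# \mathbb{Z}_2^*$ furnished by the preceding proposition, which reduces the claim to comparing the (\textbf{Fg}) property of $\Lambda$ with that of the smash product $\Lambda \# \mathbb{Z}_2^*$. Next, invoking \cite{Cohen-Montgomery}*{Lemma 3.3}, the group $\mathbb{Z}_2$ acts on $\Lambda \# \mathbb{Z}_2^*$ via $(\lambda p_h)^g = \lambda p_{hg}$, so the skew group algebra $(\Lambda \# \mathbb{Z}_2^*)\mathbb{Z}_2$ is well defined; by \cite{Cohen-Montgomery}*{Theorem 3.5} this skew group algebra is Morita equivalent to $\Lambda$ itself.

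The heart of the argument is then a three-term chain. Since (\textbf{Fg}) is a Morita invariant by the main result of \cite{Kulshammer-Psaroudakis-Skartsaeterhagen}, the algebra $\Lambda$ is (\textbf{Fg}) if and only if $(\Lambda \# \mathbb{Z}_2^*)\mathbb{Z}_2$ is. Because the characteristic differs from $2$, the order of $\mathbb{Z}_2$ is invertible in $k$, so \cref{skew-group-transfer-prop} applies with base algebra $\Lambda \# \mathbb{Z}_2^*$ and group $\mathbb{Z}_2$, yielding that $(\Lambda \# \mathbb{Z}_2^*)\mathbb{Z}_2$ is (\textbf{Fg}) if and only if $\Lambda \# \mathbb{Z}_2^*$ is. Composing these with the isomorphism $\Lambda^{[2]} \simeq \Lambda \# \mathbb{Z}_2^*$ closes the loop and delivers the stated equivalence.

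I do not anticipate a genuine computational obstacle here: once the pieces are in place the proof is a formal concatenation of cited results. The only point requiring care is bookkeeping about the characteristic hypothesis, namely confirming that it is needed solely to make \cref{skew-group-transfer-prop} applicable to the pair $(\Lambda \# \mathbb{Z}_2^*, \mathbb{Z}_2)$, and verifying that the Cohen--Montgomery results are being applied in the correct finite-dimensional $\mathbb{Z}_2$-graded setting so that the resulting Morita equivalence is of the standard type to which \cite{Kulshammer-Psaroudakis-Skartsaeterhagen} applies.
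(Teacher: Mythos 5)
Your proposal is correct and follows exactly the same chain as the paper: the identification $\Lambda^{[2]} \simeq \Lambda \# \mathbb{Z}_2^*$, the Cohen--Montgomery duality giving a Morita equivalence between $(\Lambda \# \mathbb{Z}_2^*)\mathbb{Z}_2$ and $\Lambda$, Morita invariance of (\textbf{Fg}) via K\"ulshammer--Psaroudakis--Skartsaeterhagen, and \cref{skew-group-transfer-prop} applied to the pair $(\Lambda \# \mathbb{Z}_2^*, \mathbb{Z}_2)$, with the characteristic hypothesis entering only at that last step. No discrepancies to report.
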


Let now $A$ be a basic hereditary $k$-algebra with quiver $Q$.
For instance, this holds for $A = \Lambda^{[2]}_0$ with our assumptions. 
Moreover, let $\Aut(Q_0;d)$ be the group of automorphisms of $Q_0$ that preserve the number of arrows between pairs of idempotents, let $\Out(A)$ be the group of outer automorphisms and let $\Out_0(A)$ be its subgroup consisting of those automorphisms that fix $Q_0$.
Recall that $\Out(A)$ is defined to be the quotient of $\Aut(A)$ by the subgroup of inner automorphisms, i.e.\ those defined by conjugation with an invertible element of $A$, and one proceeds similarly for  $\Out_0(A)$.
From \cite{Miyachi-Yekutieli}*{Proposition 1.7}, we have the following split exact sequence
\[
1 \rightarrow \Out_0(A) \rightarrow \Out(A) \rightarrow \Aut(Q_0;d) \rightarrow 1
\]
provided $A$ is connected. However, consulting the proof in  \cite{Miyachi-Yekutieli}*{Proposition 1.7}, we note that the result also holds without the connected assumption whenever $\Out_0(A)$ is trivial, which is what we need: In fact, by \cite{Miyachi-Yekutieli}*{Proposition 1.7} we have that $\Out_0(A)$ is trivial if the quiver of $A$ is a tree, i.e.\ in all Dynkin and extended Dynkin cases with the exception of $\widetilde{A}_n$. Moreover, by  \cite{Erdmann-Solberg-2}*{Proposition 2.1} we know that a self-injective radical-cube-zero algebra must have type Dynkin or extended Dynkin if it is to have (\textbf{Fg}).

From the split exact sequence above we deduce that any outer automorphism $\psi$ of $A$ is a composition consisting of an outer automorphism $\sigma$ of $A$ that fixes the idempotents corresponding to $Q_0$ and an element of $\Aut(Q_0;d)$, say $\pi$. To be more precise, $\pi$ is obtained as an element in $\Out(A)$ by mapping a preimage $\pi' \in \Aut(Q_0;d)$ into $\Aut(Q)$ -- the group of quiver automorphisms of $Q$ -- along a splitting, and one then takes the image of the resulting quiver automorphism in $\Out(A)$.

We would now like to use these observations to deduce that the Nakayama automorphisms of the algebras of interest to us are suitably nice provided their types are not $\widetilde{A}_n$.
To do this, we need a few observations:

\begin{prop}
 Let $\sigma$ and $\pi$ be automorphisms of $A$ and let $\sigma$ be inner. Then $D(A)_{\sigma \circ \pi} \simeq D(A)_{\pi}$ as $A$-bimodules.
\end{prop}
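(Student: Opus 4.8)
The plan is to produce an explicit bimodule isomorphism. Since $\sigma$ is inner, fix an invertible element $u \in A$ with $\sigma(a) = u a u^{-1}$ for all $a \in A$. Recall that for any automorphism $\tau$ of $A$ the bimodule ${}_{1}D(A)_{\tau}$ has underlying space $D(A)$ with the left action unchanged and the right action twisted by $\tau$; concretely, with the conventions fixed for the twisted trivial extension above, $(a \cdot f \cdot b)(x) = f(\tau(b)\, x\, a)$ for $f \in D(A)$ and $a,b,x \in A$. In particular the right action of $b$ in ${}_{1}D(A)_{\sigma \circ \pi}$ is $f(\sigma\pi(b)\, x) = f(u\,\pi(b)\,u^{-1} x)$, whereas in ${}_{1}D(A)_{\pi}$ it is $f(\pi(b)\, x)$.

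First I would write down the candidate map and check it is a linear isomorphism. Define
\[
\Phi \colon {}_{1}D(A)_{\sigma \circ \pi} \longrightarrow {}_{1}D(A)_{\pi}, \qquad \Phi(f)(x) = f(u x).
\]
Because $u$ is invertible, left multiplication by $u$ is a $k$-linear automorphism of $A$, so $f \mapsto f \circ (u\, \cdot)$ is a $k$-linear bijection of $D(A)$; hence $\Phi$ is a linear isomorphism.

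Next I would check that $\Phi$ intertwines the two bimodule structures, which by linearity it suffices to verify on the left and right actions separately. For the right action, using $\sigma\pi(b) = u\pi(b)u^{-1}$,
\begin{align*}
\Phi(f \cdot b)(x)
&= (f\cdot b)(ux) = f\big(u\pi(b)u^{-1}\, ux\big) \\
&= f\big(u\pi(b)x\big) = \Phi(f)(\pi(b)x) = \big(\Phi(f)\cdot b\big)(x),
\end{align*}
where the invertibility of $u$ yields the crucial cancellation $u^{-1}u = 1$. For the left action,
\[
\Phi(a \cdot f)(x) = (a\cdot f)(ux) = f(uxa) = \Phi(f)(xa) = \big(a \cdot \Phi(f)\big)(x).
\]
Thus $\Phi$ is an isomorphism of $A$-bimodules and the proposition follows.

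Finally, I would note where the difficulty lies: there is no structural content here, and the only place to go wrong is bookkeeping — pinning down on which side the subscript twist acts, the precise bimodule structure on $D(A)$ (so that it is consistent with the conventions already set up for $\Delta_{\sigma} A$), and the direction of the composite $\sigma \circ \pi$, so that the inserted copies of $u$ and $u^{-1}$ cancel as intended. A slightly more conceptual alternative is to reduce to the case $\pi = \mathrm{id}$: since iterated right-twisting is compatible with composition, ${}_{1}D(A)_{\sigma \circ \pi} \simeq \big({}_{1}D(A)_{\sigma}\big)_{\pi}$, and the endofunctor $(-)_{\pi}$ preserves bimodule isomorphisms, so it is enough to exhibit ${}_{1}D(A)_{\sigma} \simeq {}_{1}D(A)$ for inner $\sigma$, which is exactly the case $\pi = \mathrm{id}$ of the map $\Phi$ above.
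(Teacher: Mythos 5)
Your proof is correct and rests on the same idea as the paper's: the invertible element $u$ implementing the inner automorphism gives a multiplication-by-$u$ isomorphism that absorbs the twist by $\sigma$. The only cosmetic difference is that you write the isomorphism out explicitly on $D(A)$ and verify the bimodule compatibility by hand, whereas the paper constructs the isomorphism $A \simeq {}_{\sigma}A$ via $a \mapsto ua$, twists by $\pi$, and then dualizes.
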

\begin{proof}
That an automorphism $\sigma$ of an algebra $A$ is inner is well-known to be equivalent to there existing an $A$-bimodule isomorphism $_{\sigma}A \simeq A$. 
The implication we need follows by noting that if $\sigma(a) = uau^{-1}$, then $a \mapsto ua$ is an isomorphism from $A$ to $_{\sigma}A$.
Twisting this by $\pi$ on the left yields an isomorphism 
$${}_{\pi}A \simeq  {}_{\pi}({}_{\sigma}A) \simeq {}_{\sigma \circ \pi}A,$$ 
and the claim then follows by dualizing. 
\end{proof}

To use this, we need the following, whose proof we omit.

\begin{prop}
Graded isomorphisms of algebras $\Gamma = \Gamma_0 \oplus \Gamma_1$ and $\Gamma' = \Gamma_0 \oplus \Gamma'_1$ whose degree $0$ parts are just the identity correspond to $\Gamma_0$-bimodule isomorphisms from $\Gamma_1$ to $\Gamma'_1$. 
\end{prop}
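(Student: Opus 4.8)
The plan is to exhibit the correspondence explicitly in both directions and then check that the two assignments are mutually inverse. The key structural observation I would use is that, since both $\Gamma$ and $\Gamma'$ are concentrated in degrees $0$ and $1$, their degree-one parts square to zero: $\Gamma_1 \cdot \Gamma_1 \subseteq \Gamma_2 = 0$, and likewise $\Gamma'_1 \cdot \Gamma'_1 = 0$. Hence multiplication in each algebra is completely determined by the algebra structure on $\Gamma_0$ together with the $\Gamma_0$-bimodule structure on the degree-one part, and this is precisely what makes algebra isomorphisms collapse to bimodule isomorphisms.

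First I would treat the forward direction. Given a graded algebra isomorphism $\phi \colon \Gamma \to \Gamma'$ whose degree-zero part is the identity, the restriction $\phi|_{\Gamma_1}$ maps $\Gamma_1$ into $\Gamma'_1$ because $\phi$ is graded, and it is a bijection onto $\Gamma'_1$ because $\phi$ is bijective and fixes $\Gamma_0$. To see it is a $\Gamma_0$-bimodule homomorphism, take $a \in \Gamma_0$ and $x \in \Gamma_1$ and compute $\phi(ax) = \phi(a)\phi(x) = a\,\phi(x)$ and $\phi(xa) = \phi(x)\,a$, using multiplicativity of $\phi$ together with $\phi|_{\Gamma_0} = \mathrm{id}$. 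Thus $\phi \mapsto \phi|_{\Gamma_1}$ yields a $\Gamma_0$-bimodule isomorphism $\Gamma_1 \to \Gamma'_1$.

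Conversely, given a $\Gamma_0$-bimodule isomorphism $f \colon \Gamma_1 \to \Gamma'_1$, I would define $\phi \colon \Gamma \to \Gamma'$ by $\phi(a + x) = a + f(x)$ for $a \in \Gamma_0$ and $x \in \Gamma_1$. This is clearly graded with degree-zero part the identity, and it is bijective since $f$ is. The only point requiring verification is multiplicativity, and here the vanishing of the degree-one squares does the work: for $a + x$ and $b + y$ one has $(a+x)(b+y) = ab + (ay + xb)$, whence $\phi\big((a+x)(b+y)\big) = ab + f(ay) + f(xb) = ab + a\,f(y) + f(x)\,b$ by bilinearity of $f$, while $\phi(a+x)\phi(b+y) = (a+f(x))(b+f(y)) = ab + a\,f(y) + f(x)\,b$, the cross term $f(x)f(y)$ vanishing inside $\Gamma'_1 \cdot \Gamma'_1$. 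Preservation of the unit is immediate, as it lies in $\Gamma_0$.

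Finally I would observe that the two assignments are mutually inverse: restricting the extended map $\phi$ back to $\Gamma_1$ recovers $f$, and extending the restriction of a graded isomorphism recovers the original map, since that map already fixes $\Gamma_0$. I do not expect a genuine obstacle here; the entire content is the bookkeeping that $\Gamma_1 \cdot \Gamma_1 = 0$ reduces algebra maps to bimodule maps. The only thing to be careful about is confirming that the degree-one parts indeed square to zero in the graded setting at hand, which is exactly the hypothesis that the algebras are concentrated in degrees $0$ and $1$.
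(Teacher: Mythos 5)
Your proof is correct and is exactly the routine verification the author had in mind; the paper explicitly omits the proof of this proposition, so your write-up simply supplies it. The one point you rightly flag --- that $\Gamma_1\cdot\Gamma_1\subseteq\Gamma_2=0$ for the $\mathbb{N}$-grading in play (as it is for the twisted trivial extensions $\Delta_\sigma A = A \oplus {}_1D(A)_\sigma$ to which the proposition is applied) --- is indeed the only substantive ingredient, and it holds in the intended setting.
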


Note that this combined with the preceding result implies that the isomorphism class of a twisted trivial extension is determined by the outer automorphism class of the defining automorphism.

Finally, we want to transfer a Nakayama automorphism along an algebra isomorphism.

\begin{prop} 
If $\phi \colon \Lambda \rightarrow \Gamma$ is an isomorphism of algebras and $\Gamma$ is Frobenius with Nakayama automorphism $\nu$, then $\Lambda$ is Frobenius with Nakayama automorphism $\phi^{-1} \circ \nu \circ \phi$.
\end{prop}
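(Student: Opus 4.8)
The plan is to pull the Frobenius structure back along $\phi$ and then read off the Nakayama automorphism using the characterization in \cref{Holm-Zim-lem}. Concretely, since $\Gamma$ is Frobenius it carries a non-degenerate associative bilinear form $\langle -, - \rangle_{\Gamma}$, and I would transport this to $\Lambda$ by setting $\langle \lambda, \lambda' \rangle_{\Lambda} := \langle \phi(\lambda), \phi(\lambda') \rangle_{\Gamma}$ for $\lambda, \lambda' \in \Lambda$.

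First I would check that this form witnesses that $\Lambda$ is Frobenius. Non-degeneracy is immediate from the bijectivity of $\phi$ together with the non-degeneracy of $\langle -, - \rangle_{\Gamma}$. Associativity follows from the fact that $\phi$ is an algebra homomorphism: since $\phi(\lambda' \lambda'') = \phi(\lambda')\phi(\lambda'')$, the associativity of $\langle -, - \rangle_{\Gamma}$ transports directly to $\langle -, - \rangle_{\Lambda}$. Hence $\Lambda$ is Frobenius with the pulled-back form.

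Next I would identify the Nakayama automorphism. Using the defining property of $\nu$ from \cref{Holm-Zim-lem} applied to $\Gamma$, one has $\langle \phi(\lambda), \phi(\lambda') \rangle_{\Gamma} = \langle \phi(\lambda'), \nu(\phi(\lambda)) \rangle_{\Gamma}$. Writing $\nu(\phi(\lambda)) = \phi\bigl((\phi^{-1} \circ \nu \circ \phi)(\lambda)\bigr)$ and unwinding the definition of $\langle -, - \rangle_{\Lambda}$, this reads $\langle \lambda, \lambda' \rangle_{\Lambda} = \langle \lambda', (\phi^{-1} \circ \nu \circ \phi)(\lambda) \rangle_{\Lambda}$. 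Since $\phi^{-1} \circ \nu \circ \phi$ is a composite of an isomorphism, an automorphism, and an isomorphism, it is itself an algebra automorphism of $\Lambda$, so the second part of \cref{Holm-Zim-lem} lets me conclude that $\phi^{-1} \circ \nu \circ \phi$ is a Nakayama automorphism of $\Lambda$.

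The computation is short, so the main thing to get right is the bookkeeping in the final step: one must insert $\phi^{-1} \circ \phi$ in exactly the right slot so that the twist appearing in the Nakayama formula comes out conjugated rather than merely composed, and one must remember to verify that the resulting map is genuinely an algebra automorphism before invoking the characterization. Beyond this careful tracking of $\phi$ and $\nu$, I do not anticipate any serious obstacle.
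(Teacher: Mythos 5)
Your proof is correct, but it takes a different route from the paper's. You transport the Frobenius form along $\phi$ by setting $\langle \lambda, \lambda' \rangle_{\Lambda} := \langle \phi(\lambda), \phi(\lambda') \rangle_{\Gamma}$ and then read off the Nakayama automorphism from the identity $\langle \lambda, \lambda' \rangle = \langle \lambda', \nu(\lambda) \rangle$ of \cref{Holm-Zim-lem}; the paper instead works directly with the bimodule characterization $D(\Gamma) \simeq {}_{1}\Gamma_{\nu}$ and exhibits an explicit chain of $\Lambda$-bimodule isomorphisms $D(\Lambda) \simeq D(\Gamma) \simeq \Gamma_{\nu} \simeq \Lambda_{\phi^{-1} \circ \nu \circ \phi}$, where the outer terms carry the structures induced by $\phi$ and the last map is checked on elements. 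The two arguments are essentially dual descriptions of the same fact, so neither buys much over the other here; your version has the small advantage that non-degeneracy and associativity of the pulled-back form are one-line checks, while the paper's version avoids any appeal to \cref{Holm-Zim-lem} and stays entirely in the language of bimodules, which matches how the Nakayama automorphism is used elsewhere in the text. One point worth making explicit in your write-up: \cref{Holm-Zim-lem} ties $\nu$ to a particular choice of form on $\Gamma$, so you should fix a form for which the given $\nu$ is the associated Nakayama automorphism (this is always possible, since Nakayama automorphisms are only determined up to inner automorphisms, which is all the statement asserts anyway).
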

\begin{proof}
One can note that we have the following chain of isomorphisms of $\Lambda$-bimodules
\[
\begin{tikzcd}
D(\Lambda) \arrow[r, "D(\phi^{-1})"] & D(\Gamma) \rar & \Gamma_{\nu} \rar["\phi^{-1}"] & \Lambda_{\phi^{-1} \circ \nu \circ \phi}  
\end{tikzcd}
\]
wherein $D(\Gamma)$ and $\Gamma_{\nu}$ have the $\Lambda$-bimodule structures induced by $\phi$. Indeed, the middle map exists by the assumption that $\Gamma$ is Frobenius, whereas the final map is verified by noting that $\phi^{-1}(\phi(x) x' \nu(\phi(x''))) = x\phi^{-1}(x')\phi^{-1}(\nu(\phi(x'')))$ holds for $x,x',x'' \in \Gamma$.
\end{proof}

Combining the preceding three propositions with the fact that $\Out_0(A)$ is trivial for $A$ tame hereditary of type not $\widetilde{A}_n$, we get the next result.

\begin{prop}
Let $A$ be hereditary, and let $\sigma$ and $\pi$ be automorphisms such that $\sigma \in \Out_0(A)$ and $\pi \in \Aut(Q_0;d)$. If $\sigma$ is inner, then $\Delta_{\sigma \circ \pi}A$ and $\Delta_{\pi} A$ are isomorphic as algebras. Moreover, if $A$ is tame not of type $\widetilde{A}_n$, then any twisted trivial extensions of $A$ can be endowed with a Nakayama automorphism of finite order.
\end{prop}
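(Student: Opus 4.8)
The plan is to treat the two assertions separately, obtaining the first directly from the two bimodule propositions just established and then feeding that into the split exact sequence to deduce the second. Throughout I write $\mu$ for the distinguished Nakayama automorphism $\sigma^{-1}$ of a twisted trivial extension $\Delta_{\sigma} A$ furnished by the preceding proposition computing Nakayama automorphisms of twisted trivial extensions.

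For the first assertion, I would note that $\Delta_{\sigma \circ \pi} A$ and $\Delta_{\pi} A$ are both graded algebras with degree zero part $A$ and with degree one parts ${}_{1}D(A)_{\sigma \circ \pi}$ and ${}_{1}D(A)_{\pi}$ respectively. Since $\sigma$ is inner, the proposition on bimodules gives an $A$-bimodule isomorphism ${}_{1}D(A)_{\sigma \circ \pi} \simeq {}_{1}D(A)_{\pi}$, and by the proposition identifying graded algebra isomorphisms that are the identity in degree zero with bimodule isomorphisms of the degree one parts, this promotes to a graded algebra isomorphism $\Delta_{\sigma \circ \pi} A \simeq \Delta_{\pi} A$. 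This settles the first assertion.

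For the second assertion, let $\Delta_{\psi} A$ be an arbitrary twisted trivial extension, so $\psi \in \Aut(A)$. Since $A$ is tame and not of type $\widetilde{A}_n$, its quiver is a tree, whence $\Out_0(A)$ is trivial and the split exact sequence of \cite{Miyachi-Yekutieli}*{Proposition 1.7} identifies $\Out(A)$ with $\Aut(Q_0;d)$. Using this, I would write $\psi = \iota \circ \pi$ with $\iota$ inner and $\pi$ the algebra automorphism obtained by lifting the corresponding element of $\Aut(Q_0;d)$ to a quiver automorphism along the splitting. The crucial point is that $\pi$ has finite order: the splitting is a group homomorphism and $\Aut(Q_0;d)$ is finite, so the lifted quiver automorphism, and hence $\pi$, has some finite order $m$. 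Applying the first assertion with $\sigma = \iota$ yields an algebra isomorphism $\phi \colon \Delta_{\psi} A \to \Delta_{\pi} A$. Now $\Delta_{\pi} A$ carries the Nakayama automorphism $\mu = \pi^{-1}$, and since $\mu^m$ acts by $\pi^{-m}$ on $A$ and by $f \mapsto f \circ \pi^m$ on ${}_{1}D(A)_{\pi}$, it equals the identity, so $\mu$ has finite order. Finally, the proposition transferring Nakayama automorphisms along an isomorphism endows $\Delta_{\psi} A$ with the Nakayama automorphism $\phi^{-1} \circ \mu \circ \phi$, which is conjugate to $\mu$ and therefore of the same finite order, completing the proof.

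The main obstacle lies in the second assertion, and within it the decisive input is the finiteness of the order of the permutation part $\pi$. This rests on two facts that must be genuinely in place: that $\Out_0(A)$ vanishes for tree quivers, so that all non-inner content of $\psi$ is captured by the finite group $\Aut(Q_0;d)$; and that the splitting is realized through honest finite-order quiver automorphisms rather than merely through outer classes. Once these are secured, the first assertion lets us discard the inner factor $\iota$, and the finite order of the resulting Nakayama automorphism follows from the explicit description of $\mu$ as essentially $\pi^{-1}$.
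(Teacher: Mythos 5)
Your argument is correct and is exactly the combination the paper intends: the first assertion follows from the bimodule proposition for inner $\sigma$ together with the proposition matching graded algebra isomorphisms with bimodule isomorphisms of the degree one parts, and the second follows from the triviality of $\Out_0(A)$ for tree quivers, the finite order of the lifted quiver automorphism $\pi$, the identification of the Nakayama automorphism of $\Delta_{\pi}A$ with $\pi^{-1}$, and the transfer proposition. The paper gives no written proof beyond the sentence ``combining the preceding three propositions with the fact that $\Out_0(A)$ is trivial,'' so your write-up is a faithful (and more explicit) rendering of the same route.
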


Finally, we are ready to show the main result of this note.

\begin{thm}
Let $\Lambda$ be Frobenius of infinite representation type satisfying that
$\rad^3 \Lambda = 0 \neq \rad^2 \Lambda$. Moreover, assume that both $2$ and the order of the Nakayama automorphism of $\Lambda$ is invertible in $k$. Then $\Lambda$ is \emph{(\textbf{Fg})} if 
\begin{enumerate}[(1)]
    \item the type of $\Lambda$ is extended Dynkin but not $\widetilde{A}_n$; or
    \item the type of $\Lambda$ is $\widetilde{A}_n$ and its Nakayama automorphism is of finite order.
\end{enumerate}
\end{thm}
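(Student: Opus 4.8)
The plan is to assemble the preceding propositions into a reduction from $\Lambda$ to a basic symmetric radical-cube-zero algebra to which the weakly symmetric classification of \cite{Erdmann-Solberg-1} applies, and then to transport (\textbf{Fg}) back along the reduction. Since (\textbf{Fg}) is a Morita invariant, I would first replace $\Lambda$ by its basic version, so that $\Lambda$ becomes a representation-infinite basic self-injective radical-cube-zero algebra; it is then Koszul and positively graded as $\Lambda = \Lambda_0 \oplus \Lambda_1 \oplus \Lambda_2$. Because $2$ is invertible in $k$, \cref{quasi-vero-transfer-prop} reduces the claim to showing that the $2$-quasi-Veronese $\Lambda^{[2]}$ is (\textbf{Fg}). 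Here it is crucial that $\Lambda^{[2]}$ is, up to isomorphism, the twisted trivial extension $\Delta_\sigma(\Lambda^{[2]}_0)$ of the bipartite hereditary algebra $\Lambda^{[2]}_0$, whose type coincides with that of $\Lambda$, and whose Nakayama automorphism is $\sigma^{-1}$ by the proposition identifying the Nakayama automorphism of a twisted trivial extension.

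The decisive step is to equip $\Lambda^{[2]}$ with a Nakayama automorphism of finite order. In case (1) the type is extended Dynkin but not $\widetilde{A}_n$, so $\Lambda^{[2]}_0$ is tame hereditary with trivial $\Out_0(\Lambda^{[2]}_0)$; the final proposition before the theorem then yields an algebra isomorphism $\Lambda^{[2]} \simeq \Delta_\pi(\Lambda^{[2]}_0)$ for some $\pi \in \Aut(Q_0;d)$, and the right-hand side carries the finite-order Nakayama automorphism $\pi^{-1}$. In case (2) the type is $\widetilde{A}_n$ and the Nakayama automorphism $\nu$ of $\Lambda$ is assumed of finite order, so the Nakayama automorphism of $\Lambda^{[2]}$, being induced by $\nu$, is again of finite order. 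Writing $\mu$ for this finite-order Nakayama automorphism, I would note that its order is invertible in $k$: in case (2) this is the assumed invertibility of the order of $\nu$, and in case (1) it is the invertibility of the order of the diagram automorphism $\pi$, which is what the good-characteristic hypothesis provides.

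With $\mu$ in hand I would form the skew group algebra $A = \Lambda^{[2]} H$ for $H = \langle \mu \rangle$. By \cref{skew-group-is-sym-lem} it is symmetric, and since $\rad A = (\rad \Lambda^{[2]}) H$ it is again radical-cube-zero and representation-infinite, of the same type as $\Lambda$. Replacing $A$ by its basic version $\eta A \eta$, which stays symmetric by \cref{basic-version-also-sym-prop}, I obtain a basic weakly symmetric radical-cube-zero algebra whose type is extended Dynkin. The classification of \cite{Erdmann-Solberg-1} then shows this algebra is (\textbf{Fg}); by Morita invariance so is $A$, by \cref{skew-group-transfer-prop} so is $\Lambda^{[2]}$ (its order $|H|$ being invertible in $k$), and finally so is $\Lambda$.

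The hard part will be the bookkeeping that keeps every intermediate algebra inside the scope of the cited results. Concretely, one must verify that $\Lambda^{[2]}$, $A$, and its basic version are all (basic, after reduction) self-injective radical-cube-zero algebras of extended Dynkin type, and that each group order $|H|$ occurring is invertible in $k$ — the latter is precisely where good characteristic enters and where the exclusion of $\widetilde{A}_n$ in case (1) is essential, since it is the triviality of $\Out_0$ that forces $\mu$ to be realizable as the finite-order diagram automorphism $\pi^{-1}$. I expect the single most delicate point to be confirming that the type is preserved along the chain $\Lambda \rightsquigarrow \Lambda^{[2]} \rightsquigarrow A \rightsquigarrow \eta A \eta$, so that \cite{Erdmann-Solberg-1} genuinely applies and returns (\textbf{Fg}).
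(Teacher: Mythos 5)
Your proposal follows essentially the same route as the paper's proof: reduce to $\Lambda^{[2]}$ via \cref{quasi-vero-transfer-prop}, obtain a finite-order Nakayama automorphism (from the triviality of $\Out_0$ in case (1), from the hypothesis in case (2)), form the skew group algebra over the cyclic group it generates, which is symmetric and radical-cube-zero, pass to a basic version via \cref{basic-version-also-sym-prop}, invoke \cite{Erdmann-Solberg-1}, and transport (\textbf{Fg}) back along \cref{skew-group-transfer-prop} and Morita equivalence. The extra bookkeeping you flag (invertibility of $|H|$, preservation of type and of $\rad^3 = 0$ along the chain) is handled in the paper by the stated hypotheses and by citing \cite{Reiten-Riedtmann}*{Theorem 1.3}, so the argument is correct and matches the paper's.
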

\begin{proof}
Let $A = \Lambda^{[2]}$, and recall that the type of $A_0$ coincides with the type of $\Lambda$.
If the type of $\Lambda$ is extended Dynkin but not $\widetilde{A}_n$, then by the preceding result, $A$ has a Nakayama automorphism of finite order.
Hence, if either \textit{(1)} or \textit{(2)} hold, we can form the skew group algebra $AG$ with $G$ the finite cyclic group generated by a Nakayama automorphism of $A$, and $AG$ is then a radical-cube-zero symmetric algebra by \cite{Reiten-Riedtmann}*{Theorem 1.3} and \cref{skew-group-is-sym-lem}.
A basic version of $AG$ is then also radical-cube-zero symmetric by \cref{basic-version-also-sym-prop}, and these all satisfy (\textbf{Fg}) by \cite{Erdmann-Solberg-1}.
By \cite{Kulshammer-Psaroudakis-Skartsaeterhagen}, we have that $AG$ is (\textbf{Fg}) if  a basic version of $AG$ is, by \cref{skew-group-transfer-prop} we have that $A$ is (\textbf{Fg}) if  $AG$ is, and by \cref{quasi-vero-transfer-prop} we have that $\Lambda$   is (\textbf{Fg}) if $A = \Lambda^{[2]}$ is.
\end{proof}

\begin{remark}
We note some similarities with Said's approach in \cite{Said'15}:
whereas we use a $\mathbb{Z}_2$-covering or smash product via the 2-quasi-Veronese, Said employs a $\mathbb{Z}$-covering. 
The goal is more or less the same, namely getting a ``normal form'' that is easier to work with. 

Following this, we use some results on the structure of automorphisms of hereditary algebras to construct isomorphisms to further simplify the algebras in all cases except $\widetilde{A}_n$. 
Said, on the other hand, uses explicit base changes for the same end, although she can characterize when (\textbf{Fg}) holds in all of type $\widetilde{A}_n$.
Note, however, that the proofs in \cite{Said'15} are far longer and involve explicit case by case computations. 
\end{remark} 

By combining the theorem above with the main result of \cite{Said'15}, we obtain a complete answer to when a radical-cube-zero self-injective algebra satisfies the (\textbf{Fg}) property. 
Before we can present this, we must recall some things: Note that if $\Lambda$ is radical-cube-zero self-injective of type $\widetilde{A}_n$ and finite complexity, then the main result of \cite{Erdmann-Solberg-2} shows that $\Lambda$ has a quiver $Q$ as described in Proposition 5.1, Proposition 5.4 or Proposition 6.4 of \cite{Erdmann-Solberg-2}. 
Also note that \cite[Chapter 2]{Said'15} shows that $\Lambda$ is (\textbf{Fg}) in either of the latter two cases. 
If the quiver of $\Lambda$ is instead as in Proposition 5.1 of \cite{Erdmann-Solberg-2}, Said shows that one can assume that at most one commutativity relation in the presentation of $\Lambda = kQ/I$ involves a coefficient $q$ possibly different from $\pm 1$. 
Using this, Said then shows that $\Lambda$ is (\textbf{Fg}) if and only if this $q$ is a root of unity. 

\begin{cor}
Let $\Lambda = kQ/I$ be a connected Frobenius algebra satisfying that
$\rad^3 \Lambda = 0 \neq \rad^2 \Lambda$. 
Moreover, assume that both $2$ and the order of the Nakayama automorphism of $\Lambda$ is invertible in $k$. Then $\Lambda$ is \emph{(\textbf{Fg})} if and only if one of the following hold.
\begin{enumerate}[(1)]
    \item The type of $\Lambda$ is Dynkin.
    \item The type of $\Lambda$ is extended Dynkin but not $\widetilde{A}_n$.
    \item The type of $\Lambda$ is $\widetilde{A}_n$ and its quiver $Q$ is as described in Proposition 5.4 or 6.4 of \cite{Erdmann-Solberg-2}.
    \item The type of $\Lambda$ is $\widetilde{A}_n$, its quiver $Q$ is as described in Proposition 5.1 of \cite{Erdmann-Solberg-2}, and the Nakayama automorphism of $\Lambda$ is of finite order as an outer automorphism.
\end{enumerate}
\end{cor}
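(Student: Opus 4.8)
The plan is to prove the two implications separately, treating the four alternatives case by case and invoking the theorem above, the representation-finite case, and Said's analysis from \cite{Said'15} as appropriate. The conditions \textit{(1)}--\textit{(4)} are mutually exclusive, being distinguished by the type and, within type $\widetilde{A}_n$, by the shape of the quiver, so the reverse implication will place $\Lambda$ in exactly one of them. The only genuinely new ingredient, which I expect to be the crux, is matching Said's numerical criterion with the condition on the Nakayama automorphism in \textit{(4)}; everything else is an application of results recalled above.

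For the forward implication, suppose one of \textit{(1)}--\textit{(4)} holds. If the type is Dynkin, then $\Lambda$ is of finite representation type, by the classical trichotomy for self-injective radical-cube-zero algebras read off from the separated quiver of $\Lambda/\soc\Lambda$, and so it is (\textbf{Fg}) by \cite{Dugas'10} via the methods of \cite{Green-et-al-2}, as noted before the theorem. If the type is extended Dynkin but not $\widetilde{A}_n$, then the Nakayama automorphism is automatically of finite order, and the first alternative of the theorem gives (\textbf{Fg}). If the type is $\widetilde{A}_n$ with quiver as in Proposition 5.4 or 6.4 of \cite{Erdmann-Solberg-2}, then $\Lambda$ is (\textbf{Fg}) by \cite{Said'15}*{Chapter 2}. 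Finally, if the type is $\widetilde{A}_n$ with quiver as in Proposition 5.1 and the Nakayama automorphism is of finite order in $\Out(\Lambda)$, then, this order being invertible in $k$ by hypothesis, the second alternative of the theorem yields (\textbf{Fg}); note that for a Nakayama automorphism, which is defined only up to an inner automorphism, finite order and finite order in $\Out(\Lambda)$ are the same condition.

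For the reverse implication, assume $\Lambda$ is (\textbf{Fg}). Then $\Lambda$ has finite complexity, so \cite{Erdmann-Solberg-2}*{Proposition 2.1} forces its type to be Dynkin or extended Dynkin, and since $\Lambda$ is connected this is a single such diagram. If it is Dynkin we are in \textit{(1)}, and if it is extended Dynkin other than $\widetilde{A}_n$ we are in \textit{(2)}. Suppose then that the type is $\widetilde{A}_n$; being extended Dynkin, $\Lambda$ is representation-infinite, so the classification of such algebras of finite complexity in \cite{Erdmann-Solberg-2} forces the quiver $Q$ to be as in Proposition 5.1, 5.4, or 6.4 there. The last two place us in \textit{(3)}. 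For a Proposition 5.1 quiver, Said's normalization reduces the presentation to one carrying at most a single commutativity relation with coefficient $q \neq \pm 1$, and her analysis shows that (\textbf{Fg}) holds precisely when $q$ is a root of unity; so to land in \textit{(4)} it remains to see that a root-of-unity $q$ forces the Nakayama automorphism to have finite order in $\Out(\Lambda)$.

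This last equivalence is the step I expect to carry the real weight. I would establish it by computing the Nakayama automorphism $\nu$ of $\Lambda = kQ/I$ directly from the Frobenius form by means of \cref{Holm-Zim-lem}. One finds that $\nu$ permutes the vertex idempotents by a permutation of finite order and, relative to this permutation, acts on the arrows by scalars, the essential datum being a power of $q$ coming from the single twisted commutativity relation. Conjugation by a unit supported on the vertices can rescale individual arrows but preserves the product of scalars around each oriented cycle, so it is exactly this $q$-invariant that survives in $\Out(\Lambda)$. Consequently a power $\nu^m$ is inner if and only if the associated power of $q$ equals $1$, and hence $\nu$ has finite order in $\Out(\Lambda)$ if and only if $q$ is a root of unity. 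This matches Said's criterion with condition \textit{(4)} and settles both directions in the $\widetilde{A}_n$ Proposition 5.1 subcase; assembling the cases then completes the proof.
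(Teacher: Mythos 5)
Your proposal matches the paper's proof in both structure and substance: the same split into the two implications, the same citations disposing of cases \emph{(1)}--\emph{(3)}, and for case \emph{(4)} the same key computation of the Nakayama automorphism from the Frobenius bilinear form via \cref{Holm-Zim-lem} (the paper realizes the form as a trace form using \cite[Proposition 2.15]{Zimmermann11} and finds $\nu(\alpha) = q(\alpha^{*})^{*}$ with $q$ a root of unity under (\textbf{Fg}), exactly as you sketch). One caveat: your parenthetical assertion that for a Nakayama automorphism ``finite order and finite order in $\Out(\Lambda)$ are the same condition'' is not valid as a general principle, since a finite-order element of $\Out(\Lambda)$ need not lift to a finite-order automorphism; however, this does no damage, because your final paragraph independently supplies the route \emph{(4)} $\Rightarrow$ ($q$ a root of unity) $\Rightarrow$ (\textbf{Fg}) via Said's criterion, which is in effect how the paper handles the ``if'' direction of \emph{(4)} as well.
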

\begin{proof}
The ``if'' direction for \textit{(2)}-\textit{(4)} is clear by the above theorem in combination with the main result of \cite{Said'15}, whereas for \textit{(1)} it follows by Proposition 2.1 of \cite{Erdmann-Solberg-2} and the fact that representation finite selfinjective algebras are known to satisfy (\textbf{Fg}); see as before e.g.\ \cite{Dugas'10} and \cite{Green-et-al-2}.

For the ``only if'' direction, we assume that $\Lambda$ satisfies (\textbf{Fg}) and begin by considering cases: If $\Lambda$ is representation finite, the type must be Dynkin by the proof of \cite[Proposition 2.1]{Erdmann-Solberg-2}.
If $\Lambda$ is representation infinite, then the proof of \cite[Proposition 2.1]{Erdmann-Solberg-2} implies that the type must be extended Dynkin as otherwise it would have non-finite complexity in contradiction to the assumption that $\Lambda$ is (\textbf{Fg}). 
Hence, we only need to consider the case when $\Lambda$ is of type $\widetilde{A}_n$ and its quiver $Q$ is as in Proposition 5.1 of \cite{Erdmann-Solberg-2}. 

We now want to use \cite[Proposition 2.15]{Zimmermann11} in manner similar to  what is done in the proof of \cite[Criterion 5.1]{Lambre-Zhou-Zimmermann} to deduce that the Nakayama automorphism must be of finite order in this case. 
Thus, recall that we can choose a basis $\mathcal{B}$ of $\Lambda$ that also contains a basis for the socle of $\Lambda$, and let us define a function $\tr_{\Lambda} \colon \Lambda \rightarrow k$ to be given by $\tr_{\Lambda}(\lambda) = 1$ if $\lambda \in \soc \Lambda$ and $0$ otherwise. 
Then \cite[Proposition 2.15]{Zimmermann11} tells us that the bilinear form $\langle -, - \rangle$ of $\Lambda$ that exists by the assumption that $\Lambda$ is Frobenius can be given by $\langle \lambda, \lambda' \rangle = \tr_{\Lambda}(\lambda \lambda')$.

By our assumptions, the socle of $\Lambda$ consists only of paths of length two. 
Moreover, by considering the form of the quiver of $\Lambda$, we see that for any given arrow $\alpha \in Q_1$ there is exactly one arrow $\alpha^*$ such that $\alpha\alpha^{*} \in \soc \Lambda$.  
Since we have both $\rad^3 \Lambda = 0$ and that $\Lambda$ is Frobenius, paths $\lambda,\lambda'$ of length two starting and ending in the same vertex must be equal up to some scalar. 
However, by \cite[Chapter 2, Section 3]{Said'15}, we can assume any coefficient in a commutativity relation to be a root of unity and hence the same is true of any such scalar. 
Consequently, since for $\alpha \in Q$ we have $\langle \alpha, \alpha^{*} \rangle = \langle \alpha^{*}, \nu(\alpha) \rangle$, we see that the preceding observations combined with \cref{Holm-Zim-lem} imply we can choose a Nakayama autmorphism $\nu$ such that $\nu(\alpha)$ equals $q (\alpha^{*})^{*}$ for $q$ some root of unity. 
Hence, since any two Nakayama automorphisms are equal up to composition with an inner automorphism, we deduce that any Nakayama automorphism of $\Lambda$ is of finite order as an outer automorphism. This finishes the proof. 
\end{proof}

\bibliography{bib.bib}
\bibliographystyle{alpha}
\end{document}